\documentclass[11pt,english,reqno]{amsart}

\usepackage[top=3cm,bottom=3.5cm,inner=3cm,outer=3cm]{geometry}
\usepackage{setspace}
\usepackage[pagebackref]{hyperref}
\usepackage{enumerate}
\usepackage{amsmath,amsfonts,amssymb,amsthm}
\usepackage[T1]{fontenc}
\usepackage{caption}
\usepackage{graphicx}
\usepackage{csquotes}
\usepackage{tikz} 
\usetikzlibrary{decorations.pathreplacing, calc, arrows, decorations.markings}

\setlength{\parskip}{\medskipamount}
\setlength{\parindent}{0pt}
\setstretch{1.2}
\MakeOuterQuote{"}

\newtheorem{theorem}{Theorem}
\newtheorem{corollary}[theorem]{Corollary}
\newtheorem{conjecture}[theorem]{Conjecture}
\newtheorem{lemma}[theorem]{Lemma}

\newcommand{\dir}{\mathrm{dir}\,}

\newcommand{\F}{\mathcal{F}}
\newcommand{\G}{\mathcal{G}}
\newcommand{\Gd}{\G_{\dir}}
\newcommand{\Go}{\G_o^{a,b}}
\begin{document}

\title{New $\F$-Saturation Games on Directed Graphs}

\author[J. D. Lee]{Jonathan D. Lee}
\address{Department of Pure Mathematics and Mathematical Statistics, University of Cambridge, Wilberforce Road, Cambridge CB3\thinspace0WB, UK}
\email{j.d.lee@dpmms.cam.ac.uk}

\author[A. Riet]{Ago-Erik Riet}
\address{University of Tartu, Faculty of Mathematics and Computer Science, Institute of Computer Science, Juhan Liivi 2, 50409 Tartu, Estonia}
\email{ago-erik.riet@ut.ee}

\date{\today}
\subjclass[2010]{Primary 60K35; Secondary 60C05}

\begin{abstract}
We study analogues of \emph{$\F$-saturation games}, first introduced by F\"uredi, Reimer and Seress~\cite{FuReSe} in 1991, and named as such by West~\cite{We}. We examine analogous games on directed graphs, and show tight results on the walk-avoiding game. We also examine an intermediate game played on undirected graphs, such that there exists an orientation avoiding a given family of directed graphs, and show bounds on the score. This last game is shown to be equivalent to a recent game studied by Hefetz, Krivelevich, Naor, and Stojakovi\'{c}in \cite{HeKrNaSt}, and we give new bounds for biased versions of this game.
\end{abstract}

\maketitle

\section{Introduction}

For $\F$ a family of digraphs, we say a digraph $G$ is \emph{$\F$-homomorphism-free} if for all $F \in \F$, there is no homomorphism $F \rightarrow G$. We say $G\subset H$ is a \emph{$\F$-homomorphism-saturated} subgraph of $H$ if $G$ is a maximal $\F$-homomorphism-free subgraph of $H$. Take $H$ a digraph, $|H| = n$, and let $\F$ be a family of digraphs. In a similar fashion to the definition of the triangle free game of F\"uredi, Reimer and Seress~\cite{FuReSe}, and building on the notation of West\cite{We}, we define the \emph{directed $\F$-homomorphism-saturation game} as follows.

We have two players, \emph{Prolonger} and \emph{Shortener}, who we take to be male and female respectively. We define a graph process $\G_i$. We initially set $\G_0 = E_n$, the empty graph on $n$ vertices. The process ends at time $t^*$ if $G_{t^*}$ is $\F$-homomorphism-saturated. Otherwise, at time $2t$ , Prolonger chooses an edge $uv$, such that $uv \not\in \G_{2t}$ and $\G_{2t}\cup uv$ is $\F$-homomorphism-free, and $\G_{2t+1} = G_{2t} \cup uv$. Similarly, at time $2t+1$ Shortener chooses an edge to add, such that the process remains $\F$-homomorphism-free. Prolonger's goal is to maximise $t^*$, whilst Shortener wishes to minimise $t^*$. Our results will not depend on which of the two players moves first, and so we refer to this game as $\Gd(H;\F)$. We say the value of $t^*$ under optimal play by both Prolonger and Shortener is the \emph{score} or \emph{game saturation number} of $\Gd(H;\F)$, denoted by $\textbf{Sat}(\Gd)$ or simply $\Gd$ provided there can be no confusion. When only one graph is excluded, we write $\Gd(H;F) := \Gd(H;\{F\})$.

For $\F$ a family of digraphs, we say a graph $G$ is \emph{$\F$-orientation-free} if there is an orientation $G'$ of $G$ such that $G'$ contains no $F \in \F$ as a subdigraph. We say $G$ is a \emph{$\F$-orientation-saturated} subgraph of $H$ if $G$ is a maximal $\F$-orientation-free subgraph of $H$. Take $H$ a graph, $|H| = n$, and let $\F$ be a family of graphs. We define the \emph{$\F$-orientation-saturation game} similarly to the directed $\F$-homomorphism-saturation game. The game terminates at time $t^*$ if $G_{t^*}$ is $\F$-orientation-saturated. Otherwise a player chooses a new edge from $H$ to add to the process such that the process remains $\F$-orientation-free. We \emph{bias} the game by allowing Prolonger to take $a$ consecutive turns, and then Shortener to take $b$ consecutive turns. We refer to this game as $\Go$, and carry over notation analogously.

\section{Our Results}
\begin{theorem}\label{directedgamescore}
Let $n, k$ be any integers. Then we have:
\[\Gd(K_n, P_k) = \left\{\begin{array}{c l}      
    0 & k \leq 2\\
    \left\lfloor\frac{n^2}{4}\right\rfloor & k = 3\\
\frac{1}{3}n^2 + \frac{1}{3}nk + O(n + k^2) & k \geq 4 \end{array}\right.\]
\end{theorem}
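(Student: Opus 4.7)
The case $k\le 2$ is immediate: for $k=2$ any edge is itself a copy of $P_2$, so $\G_0=E_n$ is already saturated and $t^*=0$.

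For $k=3$, a digraph is $P_3$-homomorphism-free iff it contains no directed walk of length two, iff every vertex has zero in-degree or zero out-degree. At every stage the vertex set therefore partitions as $V=A\sqcup B\sqcup I$, with all edges running $A\to B$. A position is saturated precisely when $|I|\le 1$ and every $A\times B$ edge is present, so the final score equals $|A|\cdot|B|$. The upper bound $\lfloor n^2/4\rfloor$ is immediate from $|A|+|B|\le n$. For the matching lower bound I would give Prolonger an explicit balancing strategy: when $|A|=|B|$ and $|I|\ge 2$ play an edge between two isolated vertices, otherwise play to repair any imbalance introduced by Shortener's last move. Because a single Shortener move shifts $|A|-|B|$ by at most one, Prolonger can maintain $\bigl||A|-|B|\bigr|\le 1$ throughout, and the game ends with $|A|\cdot|B|=\lfloor n^2/4\rfloor$.

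For $k\ge 4$ the main structural tool is the equivalence that $G$ is $P_k$-homomorphism-free iff $G$ is a DAG whose longest directed path has length at most $k-2$, equivalently iff $G$ admits a homomorphism to the transitive tournament $TT_{k-1}$ on $k-1$ vertices. Assigning each vertex $v$ its height $h(v)\in\{0,\dots,k-2\}$, defined as the length of the longest directed path ending at $v$, produces a canonical $(k-1)$-layering in which every edge strictly increases height. For the lower bound I would have Prolonger aim for the target configuration of three ``fat'' layers $V_0,V_1,V_2$ of size $m=\lfloor(n-k+4)/3\rfloor$ and $k-4$ singleton tail layers $V_3,\dots,V_{k-2}$, with all edges $V_i\to V_j$ for $i<j$. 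A direct expansion gives
\[
3m^2+3m(k-4)+\binom{k-4}{2} \;=\; \tfrac{n^2}{3}+\tfrac{nk}{3}+O(n+k^2),
\]
matching the claimed leading behaviour. Prolonger adaptively maintains a layering assignment of the current graph and plays edges within this target, relabelling vertices when Shortener disrupts the current plan; a bookkeeping argument shows that the resulting loss is only $O(n+k^2)$.

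The matching upper bound is the heart of the argument, and I expect it to be the main technical obstacle. Since a balanced $\ell$-layering yields $\tfrac{\ell-1}{2\ell}n^2>\tfrac{n^2}{3}$ edges in its complete DAG for every $\ell\ge 4$, Shortener must prevent the emergence of a fourth fat layer. My plan is for Shortener to maintain a distinguished directed chain of length near $k-2$ spanning the extreme heights, and whenever Prolonger threatens to populate a middle layer $h\in\{3,\dots,k-3\}$ with too many vertices, Shortener extends the chain at an endpoint, forcing newly created vertices into either the three fat layers or the singleton tail layers. A careful audit of the excess edges not accounted for by the target configuration then yields the $O(n+k^2)$ overhead, completing the proof.
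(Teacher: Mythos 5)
Your $k\le 2$ and $k=3$ cases are complete and essentially identical to the paper's (Prolonger's balancing strategy plus the trivial bound $|A|\cdot|B|\le\lfloor n^2/4\rfloor$); the only slip is that a position with $|I|=1$ and $A\cup B\ne\emptyset$ is not yet saturated, which does not affect the conclusion. For $k\ge4$ your structural setup (acyclicity, the height function, equivalently a homomorphism into the transitive tournament on $k-1$ vertices) is exactly the paper's classification lemma, and you have correctly identified the extremal configuration of three fat classes plus $k-4$ singletons. But both bounds are left as plans precisely where the work lies. For the lower bound, "Prolonger adaptively maintains a layering \ldots\ relabelling \ldots\ a bookkeeping argument shows the loss is $O(n+k^2)$" conceals the entire adversarial problem: Shortener is not merely trying to stop Prolonger from reaching three fat layers, she is trying to do strictly better, e.g.\ to pin almost all vertices into \emph{two} extreme classes (yielding only about $n^2/4$ edges), and each of her edges can pin a vertex permanently (an out-edge to the second vertex of a spanning path fixes a vertex in $G_1$ forever). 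You must show Prolonger can guarantee $\sum_i|G_i|^2\le \tfrac{n^2}{3}+O(nk)$ against this. The paper does it locally: Prolonger grows disjoint path-structures $A_\lambda,B_\lambda,C_\lambda$, each certified to spread its vertices over at least three classes in the normalized sum-of-squares sense ($s\le\tfrac13$), with an explicit case analysis of how a Shortener move degrades a structure ($A_\lambda\to B_{\ge\lambda+1}\to C_{\ge\lambda+1}$) and a convexity step to pass from local to global. Nothing in your sketch substitutes for this analysis.

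For the upper bound your plan is closer to the paper's, but the stated mechanism does not work as written: extending the distinguished chain at an endpoint pins nothing. What pins a fresh vertex $u$ is attaching it to an \emph{interior} vertex of a spanning path $v_1v_2\cdots v_{k-1}$: the edge $\overrightarrow{uv_2}$ forces $u\in G_1$ permanently, $\overrightarrow{v_{k-2}u}$ forces $u\in G_{k-1}$, and similarly for $G_{k-2}$ (your choice of the three lowest layers would work too, via $\overrightarrow{uv_4}$). You must also face the counting issue that Prolonger can touch two fresh vertices per move while Shortener replies with only one edge; the paper resolves this by observing that a single edge $\overrightarrow{v_{k-3}u}$ into the tail of an isolated edge $uu'$ pins both $u$ and $u'$ simultaneously. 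Without these ingredients the "careful audit" cannot be carried out, so as it stands the $k\ge4$ case is an accurate description of the answer rather than a proof.
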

\begin{theorem}\label{orientedgamescore}
For all $a, b \ll k \ll n$, we have:
\[\binom{n}{2}\left(1 - \frac{1}{\lambda^- k}\right) (1+o(1))\leq \Go(K_n, P_{k+1}) \leq \binom{n}{2}\left(1 - \frac{1}{\lambda^+ k}\right) (1+o(1)),\]
with constants $\lambda^+, \lambda^-$ defined as:
\[\lambda^- = \frac{\left\lfloor\frac{b}{2a}\right\rfloor}{1+\left\lfloor\frac{b}{2a}\right\rfloor},\quad
  \lambda^+ = \frac{1}{1+\left\lfloor\frac{a}{2b}\right\rfloor}.\] 
\end{theorem}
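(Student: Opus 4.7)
The plan is to translate the game into a chromatic saturation game via the Gallai--Roy--Vitaver theorem, and then to devise explicit strategies for each player that exploit their biased budgets.

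The first step uses Gallai--Roy--Vitaver: a graph $G$ admits an orientation avoiding any directed $P_{k+1}$ if and only if $\chi(G) \le k$. Thus $\Go(K_n, P_{k+1})$ is equivalent to the game in which players alternate blocks of moves, each adding an edge of $K_n$ while keeping $\chi \le k$, and the game ends at an edge-maximal $k$-colorable subgraph. Such a subgraph is necessarily a complete multipartite graph $K_{n_1, \ldots, n_k}$ with $n_i \ge 1$, so the score is $t^* = \binom{n}{2} - \sum_i \binom{n_i}{2}$. Observe that $(1 - 1/(\lambda k))\binom{n}{2}$ is precisely the edge count of the balanced complete $\lambda k$-partite graph on $n$ vertices up to lower-order terms; so the theorem reduces to showing that Shortener can force the end partition into roughly $\lambda^+ k$ effective classes and Prolonger can force at least $\lambda^- k$. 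This identification is also where the equivalence with the game of Hefetz, Krivelevich, Naor, and Stojakovi\'c in \cite{HeKrNaSt} arises (the case $a=b=1$).

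For the upper bound I would give Shortener the following strategy. She selects $1 + \lfloor a/(2b) \rfloor$ "anchor" vertices and, at the start, uses $O(k^2)$ of her moves to install a clique on them (absorbed by the $(1 + o(1))$ factor under $a, b \ll k \ll n$), forcing them into distinct colors. During each subsequent round of her $b$ moves, she pins one additional vertex to a specified target class by adding an edge to the appropriate anchor, and selects her targets so as to shrink the partition down to $\lambda^+ k$ active classes. Prolonger's $a$ moves per round can release some committed vertices, but each such release costs him two moves on average: one to cut a vertex from its anchor-constraint and one to re-commit it to a fresh class, because legality in the chromatic game forbids doing both in a single move. An amortized potential argument over the current set of committed vertices then yields the ratio $1 + \lfloor a/(2b)\rfloor$. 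The lower bound is symmetric: Prolonger maintains balance among the classes via round-robin cross-edges, while Shortener can skew only a $\lfloor b/(2a)\rfloor$-controlled fraction of them.

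The main obstacle is the amortized analysis that justifies the factors of $2$ in both floor functions. This will require a carefully chosen potential on partial graphs measuring the commitment state of each vertex, together with a case-check that every Shortener move in her strategy shifts it by exactly $1$ while no Prolonger move can shift it by more than $1/2$ (and vice versa for the lower bound). A secondary subtlety is that the strategy must remain legal throughout the game: the hypothesis $a, b \ll k \ll n$ guarantees enough vertices and enough free color classes at every step so that both the anchor clique can be installed and the pinning edges are never blocked by prior constraints.
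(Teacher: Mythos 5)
Your reduction via Gallai--Hasse--Roy--Vitaver to a $k$-colourability game, the observation that saturated positions are complete multipartite graphs with score $\binom{n}{2}-\sum_i\binom{n_i}{2}$, and the identification of the two bounds with controlling the number and balance of colour classes all match the paper's setup. But the heart of the proof --- the explicit strategies and the invariant that makes them legal and effective --- is missing, and the mechanisms you do sketch would not work. First, your anchor set for Shortener has constant size $1+\lfloor a/(2b)\rfloor$; a single edge from $v$ to an anchor only forbids one colour for $v$, so it cannot ``pin $v$ to a specified target class,'' and a constant-size clique cannot retire $(1-\lambda^+)k$ colour classes. The paper instead has Shortener build a clique on a set $S$ of roughly $(1-\lambda^+)k$ vertices and then fight to join \emph{every} vertex of $S$ to \emph{all} $n$ vertices, so that each vertex of $S$ ends in a singleton class; this is $\Theta(kn)$ edges of dedicated work, not one edge per pinned vertex. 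Second, your amortized argument speaks of Prolonger ``releasing committed vertices'' and ``cutting a vertex from its anchor-constraint,'' but edges are never removed in this game, so no commitment is ever undone; there is nothing for a potential function of that kind to measure.

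The factor of $2$ in the floor functions also does not come from a ``costs two moves on average'' accounting. In the paper it arises because each of the opponent's edges has two endpoints: the responding player classifies edges as Red (progress toward her target structure) or Blue (everything else) and maintains, after each of her turns, a pointwise degree-ratio invariant of the form $d_{Red}(v)\ge c\,d_{Blue}(v)$, which is sustainable precisely when her budget covers $c$ responses per opponent edge-\emph{end}, i.e.\ $a\ge 2bc$ giving $c=\lfloor a/(2b)\rfloor$ (and symmetrically $\lfloor b/(2a)\rfloor$). At saturation the invariant converts into a lower bound of $\frac{c}{c+1}$ on the fraction of each vertex's neighbourhood lying inside its intended clique (for Prolonger's bound) or into the completion of all edges into $S$ (for Shortener's bound), which is what yields $\lambda^{\pm}$. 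Since you explicitly defer the ``main obstacle'' of the amortized analysis and the strategies as described are not implementable as legal move sequences, the proposal is a plausible plan rather than a proof; the Red/Blue degree-ratio invariant (or some equivalent) needs to be supplied.
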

We further conjecture that
\begin{conjecture}
For all $a, b \ll k \ll n$, we have:
\[\Go(K_n, P_{k+1}) = \binom{n}{2}\left(1 - \frac{1}{\lambda k}\right) (1+o(1)),\]
with
\[\lambda = \frac{2a}{b+2a}\]
\end{conjecture}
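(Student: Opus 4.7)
Via the Gallai--Hasse--Roy--Vitaver theorem, a graph $G$ is $P_{k+1}$-orientation-free if and only if $\chi(G) \le k$. Hence $\Go(K_n; P_{k+1})$ is the game saturation number for the monotone property of $k$-colourability, and terminal positions are precisely the complete $k$-partite graphs on $[n]$. If such a graph has part sizes $n_1, \dots, n_k$, the score is $\binom{n}{2} - \sum_i \binom{n_i}{2}$; Prolonger aims to balance the partition while Shortener aims to skew it. The conjecture is equivalent to showing that under optimal play $\sum_i \binom{n_i}{2} = (1+o(1))\binom{n}{2}(b+2a)/(2ak)$, which for large $k$ is a factor $(1 + b/(2a))$ above the balanced value $\binom{n}{2}/k$.

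The plan is to exhibit matching strategies yielding a terminal partition consisting of one ``heavy'' class of size $\Theta(n\sqrt{b/(ak)})$ together with $k-1$ essentially balanced classes of size $(1+o(1))n/k$. Shortener's strategy fixes a distinguished class $V_1$ in advance and uses each of her turns to commit a fresh vertex to $V_1$, by joining the vertex to a representative of every other already-populated class. Prolonger's dual strategy commits each of his turns to whichever non-heavy class is currently smallest, keeping the non-heavy classes in tight balance. Both strategies are analysed against a ``committed partition'' data structure which records, at each stage, those vertices already forced into a unique class by the current edge set.

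To extract the specific constant $\lambda = 2a/(b+2a)$ I would set up a drift analysis for the size $h(t)$ of the heavy class as a function of the number of super-rounds. Each super-round of $a+b$ moves sees Shortener contribute $b$ units of growth to $h$, while Prolonger effectively contributes only $-a/2$: roughly half of each Prolonger commitment is ``spent'' attaching the new small-class representative to the heavy class, an edge which Shortener then exploits on a subsequent turn. This $2{:}1$ trade is precisely what turns the Prolonger parameter into $2a$ in $\lambda = 2a/(b+2a)$, and solving the resulting first-order recursion yields $h \to n\sqrt{b/(2ak)}$ at game end. Direct computation of $\sum_i \binom{n_i}{2}$ on the resulting partition then recovers the conjectured formula to leading order.

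The main obstacle lies in the non-uniqueness of intermediate $k$-colourings. A vertex's eventual class is not determined by the current edge set until enough edges have been added, and the same added edge may ``commit'' a vertex under one colouring yet be vacuous under another. To push past this, one must argue that after a short initial transient only $o(n)$ vertices remain uncommitted, so that the bulk of the game genuinely evolves according to the clean ``commit or defend'' picture sketched above. Proving this --- particularly showing that both players find it suboptimal to maintain ambiguity, thereby pinning down both the factor of $2$ in the numerator of $\lambda$ and closing the gap with $\lambda^{\pm}$ from Theorem \ref{orientedgamescore} --- is where I expect the bulk of the technical work to reside.
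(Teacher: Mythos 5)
The statement you are trying to prove is a \emph{conjecture}: the paper explicitly states ``We cannot show bounds of this tightness,'' and proves only the weaker two-sided bounds of Theorem~\ref{orientedgamescore}. So your proposal must stand on its own, and as written it does not: it is a strategy sketch whose decisive step is deferred. Your reduction is sound --- the Gallai--Hasse--Roy--Vitaver equivalence with $k$-colourability, the fact that saturated positions are complete $k$-partite graphs, and the score formula $\binom{n}{2}-\sum_i\binom{n_i}{2}$ all match the paper's framing. But the entire content of the conjecture lies in the ``committed partition'' issue you relegate to the final paragraph. Until a vertex is adjacent to a set of vertices that is forced to receive $k-1$ distinct colours in \emph{every} proper $k$-colouring, its class is ambiguous, and an edge that ``commits'' it under one colouring is vacuous under another; this is precisely the obstruction that confines the authors to the integral ratios $\lfloor b/2a\rfloor$ and $\lfloor a/2b\rfloor$ in $\lambda^{\pm}$. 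Your drift analysis presupposes that commitment is a well-defined monotone quantity that Shortener increments by $b$ and Prolonger decrements by $a/2$ per super-round; the factor of $2$ (hence the constant $2a/(b+2a)$) is asserted via ``roughly half of each Prolonger commitment is spent,'' not derived, and the units are unclear (committing one vertex to a class costs on the order of $k$ edges, not one).

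A second, concrete warning sign: your conjectured terminal structure (one heavy class of size $n\sqrt{b/(2ak)}$ plus $k-1$ balanced classes) is \emph{different} from the paper's heuristic one ($(1-\lambda)k$ singleton classes plus $\lambda k$ classes of size $n/(\lambda k)$), yet both yield the same leading-order score $\binom{n}{2}(1-\tfrac{1}{\lambda k})$. This degeneracy means a drift analysis tracking only the heavy-class size $h(t)$ cannot close the argument: if Shortener could build your heavy class \emph{and} additionally isolate $\Theta(k)$ vertices into singleton classes (the paper's mechanism), she would gain an extra $h^2=\Theta(n^2b/(ak))$ in $\sum_i n_i^2$ and beat the conjectured value. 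So a correct Prolonger lower-bound strategy must simultaneously bound the number of small classes and the size of any large class, and your single-variable recursion does not see this. As it stands the proposal is a plausible heuristic for one side of the game but not a proof, and it does not overcome the ambiguity-of-colourings obstruction that the paper identifies as the reason the conjecture remains open.
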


In section~\ref{directedsection}, we turn our attention to the walk-avoiding game $\Gd(K_n, P_k)$ and Theorem~\ref{directedgamescore}. In section~\ref{orientedsection} we will prove Theorem~\ref{orientedgamescore}.

\section{Avoiding a directed walk on \texorpdfstring{$k$}{k} vertices}\label{directedsection}

We begin with a structural classification of all $P_k$-homomorphism-saturated graphs. This lemma will enable us to reason about the ultimate score of the game from an early stage, and thus allow for the exhibition of effective strategies for both Prolonger and Shortener.

\begin{lemma}
Any directed $P_k$-homomorphism-saturated graph $G$ on at least $k-1$ vertices is induced by a total order on $k - 1$ vertex classes $G_1,\ldots,G_{k-1}$. If $|G| \geq k$ then $|G_i| > 0\;\forall\; i$, otherwise $|G_i| \leq 1$.
\end{lemma}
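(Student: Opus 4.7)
The plan is to build a canonical vertex partition from a potential function, use saturation to force all forward edges to appear, and then read off the cardinality conditions from the same walk-length computation.

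First I would observe that $G$ must be acyclic, since a directed cycle yields walks of arbitrary length and hence a $P_k$-homomorphism. With $G$ a DAG, define $f(v)$ to be the length of a longest walk ending at $v$ and $g(v)$ the length of a longest walk starting at $v$; both lie in $\{0,\ldots,k-2\}$, and concatenation gives $f(v)+g(v)\leq k-2$. Setting $G_i=\{v:f(v)=i-1\}$ for $i=1,\ldots,k-1$, any edge $u\to v$ in $G$ increases $f$ strictly, so edges only run from lower- to higher-indexed classes.

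The heart of the proof is the converse: whenever $u\in G_i$, $v\in G_j$, and $i<j$, the edge $u\to v$ must actually lie in $G$. If not, saturation forces a walk of length $k-1$ in $G+(u\to v)$. Since $f(u)<f(v)$ rules out any walk from $v$ back to $u$ in $G$, the new edge can appear at most once in such a walk, bounding its length by $f(u)+1+g(v)\leq (i-1)+1+(k-1-j)<k-1$ — a contradiction. This is the step I expect to require the most care, as the absence of a $v$-to-$u$ walk is the single fact that simultaneously preserves acyclicity in $G+(u\to v)$ and controls repetitions of the new edge; both are needed to make the walk-length bound rigorous. Once established, $G$ is completely determined by the linear order on its classes.

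For the cardinality claim, set $m^*=\max_v f(v)$. Walking backward from any vertex attaining $m^*$ shows $G_1,\ldots,G_{m^*+1}$ are non-empty while $G_{m^*+2},\ldots,G_{k-1}$ are empty, and the total-order structure then yields $g(v)=m^*+1-i$ for every $v\in G_i$. If two vertices $u,v$ lie in the same class $G_i$, applying the walk-length bound to the non-edge $u\to v$ forces $m^*+1\geq k-1$, i.e., all $k-1$ classes are non-empty. Consequently, when $|G|=k-1$ any class of size $\geq 2$ would force $|G|\geq k$, so each class is a singleton; and when $|G|\geq k$, if some class were empty then at most $k-2$ non-empty classes would hold $\geq k$ vertices, pigeonhole would produce a class of size $\geq 2$, and the same implication would force every class non-empty — a contradiction.
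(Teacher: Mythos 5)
Your proposal is correct and follows essentially the same route as the paper: classify vertices by the length of the longest walk terminating there, use saturation together with the bound $f(u)+1+g(v)$ (valid because no walk runs from $v$ back to $u$) to force every forward edge to be present, and then derive the cardinality conditions by applying the same bound to a non-edge within a single class. Your version merely makes explicit a couple of points the paper leaves implicit (acyclicity of $G$ plus the new edge, and the pigeonhole step for $|G|\geq k$).
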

\begin{proof}
Since there is no graph homomorphism from $P_k$ to a $P_k$ saturated graph, $G$ must be acyclic. Hence $G$ can be topologically sorted to give a
linear ordering of the vertices such that $\overrightarrow{uv} \in E(G)$ implies $u < v$ in the ordering. Let us call $v_1v_2\ldots v_m$ a descending sequence of vertices if $v_iv_{i+1}$ is a directed edge for every $i = 1, 2,\ldots,m-1$. Suppose that the longest descending sequence of vertices has $l$ vertices. Let us define classes $G_1,G_2,\ldots,G_l$ so that $v \in G_i$ if the longest descending sequence of vertices
ending at $v$ has $i$ elements. Then for any edge $\overrightarrow{uv}$, $u \in G_i, v \in G_j$ for $i < j$.

By construction $|G_i| > 0 \Rightarrow |G_j| > 0 \;\forall\; j < i$. If $|G_{k-1}| = 0$ and $u, v \in G_i$ are distinct,
then $G \cup \overrightarrow{uv}$ has no $P_k$, so G is not $P_k$ saturated. Hence either all classes are non-empty, or all classes have size at most $1$, in which case $|G| < k-1$, and $G$ is induced by a total order on its vertices.

If $u \in G_i, v \in G_j, i < j$, then any walk in $G\cup \overrightarrow{uv}$ using $\overrightarrow{uv}$ has length at most
$i + 1 + (k - 1 - j) < k$, so $uv \in G$ if $G$ is $P_k$ saturated. Hence $G$ is induced by the total ordering of the vertex classes $G_i$.
\end{proof}

In the following discussion, we will define the \emph{vertex classes} of a $P_k$-homomorphism-saturated graph $G$, as $G_i$ the set of vertices $v$ such that the longest path in $G$ terminating at $v$ has $i - 1$ edges. Hence $V = G_1 \sqcup \ldots \sqcup G_{k-1}$.

\begin{corollary}\label{size-corol}
In a $P_k$-homomorphism-saturated graph $G$, with vertex classes $G_i$, we have \[e(G) = \frac{1}{2}\left(n^2 - \sum_{i=1}^{k-1}|G_i|^2\right).\]
\end{corollary}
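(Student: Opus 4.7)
The plan is to exploit the structural lemma just proved, which says that any $P_k$-homomorphism-saturated graph $G$ on $n$ vertices is completely determined by a linear order on its vertex classes $G_1,\ldots,G_{k-1}$, in the sense that $\overrightarrow{uv} \in E(G)$ if and only if $u \in G_i$ and $v \in G_j$ for some $i<j$. Granted this description, the proof of the corollary is essentially a bookkeeping exercise.

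First I would note that between any two distinct classes $G_i$ and $G_j$ with $i < j$, the number of directed edges is exactly $|G_i|\cdot|G_j|$, since every such ordered pair contributes one edge, while within a class $G_i$ no edges are present (both endpoints would lie in the same class, contradicting the lemma's claim that edges respect the strict order on classes). Summing this over pairs of classes gives
\[
e(G) \;=\; \sum_{1 \leq i < j \leq k-1} |G_i|\,|G_j|.
\]

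The last step is the algebraic identity
\[
\Bigl(\sum_{i=1}^{k-1} |G_i|\Bigr)^{\!2} \;=\; \sum_{i=1}^{k-1} |G_i|^2 \;+\; 2\sum_{1 \leq i < j \leq k-1} |G_i|\,|G_j|,
\]
combined with $\sum_i |G_i| = n$, to yield
\[
e(G) \;=\; \tfrac{1}{2}\Bigl(n^2 - \sum_{i=1}^{k-1} |G_i|^2\Bigr),
\]
as required. The same formula also handles the boundary case $|G| < k$ in which some classes are empty or singletons, since then the corresponding terms simply contribute $0$ or $1$ to the quadratic sum. There is no real obstacle here; the only thing to be careful about is justifying at the outset that there are no intra-class edges, which requires a brief appeal to the definition of the classes via longest descending sequences (an intra-class edge would immediately extend such a sequence, a contradiction).
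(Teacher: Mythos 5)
Your proof is correct and follows essentially the same route as the paper: count the edges between classes as $\sum_{i<j}|G_i||G_j|$ using the structural lemma (complete between classes, empty within), then apply the identity $\left(\sum_i |G_i|\right)^2 = \sum_i |G_i|^2 + 2\sum_{i<j}|G_i||G_j|$ with $\sum_i|G_i|=n$. No differences worth noting.
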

\begin{proof}
As $n = \sum_i |G_i|$, we have that $e(G) = \sum_{i < j} |G_i||G_j| = \frac{1}{2}\left(n^2 - \sum_{i=1}^{k-1}|G_i|^2\right)$, as every pair of vertices are either in the same vertex class or have a directed edge between them.
\end{proof}

With this structural lemma in hand, and a characterisation of the score in terms of the size of the vertex classes, we now turn to explicit strategies for Shortener and Prolonger.

\begin{lemma}\label{path-Shortener} Let $n \geq k \geq 4$. Then:
\begin{align*}
\Gd(K_n, P_k) &\leq \frac{1}{2}\left(n^2 - k + 4 - 2\left\lfloor\frac{n-k+4}{3}\right\rfloor^2 - \left\lceil\frac{n-k+4}{3}\right\rceil^2\right)\\
&= \frac{1}{3}n^2 + \frac{1}{3}kn + O(n + k^2).
\end{align*}
\end{lemma}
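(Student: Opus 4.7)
The plan is to exhibit a strategy for Shortener forcing the final vertex-class profile $(s_1,\ldots,s_{k-1})$ of the saturated graph to satisfy
\[
\sum_{i=1}^{k-1} s_i^2 \;\geq\; (k-4) + 2\left\lfloor\tfrac{n-k+4}{3}\right\rfloor^2 + \left\lceil\tfrac{n-k+4}{3}\right\rceil^2,
\]
which by Corollary~\ref{size-corol} gives the displayed edge bound. The target terminal configuration has $k-4$ singleton classes together with three ``bulk'' classes whose sizes are as equal as possible around $(n-k+4)/3$.

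The strategy I would use is organised around a fixed template: designate $k-4$ of the indices in $\{1,\ldots,k-1\}$ as singleton positions and the remaining three as bulk positions, and pre-commit $k-4$ marker vertices $w_1,\ldots,w_{k-4}$ to the singleton positions in the eventual total order. On each of her turns, Shortener plays according to a fixed priority: if the marker chain between consecutive $w_i$'s is incomplete, she supplies a missing marker edge; otherwise she picks a still-unplaced vertex $v$ together with an anchor $u$ in the currently smallest bulk class, and plays the edge $uv$ oriented so as to force $v$ into that bulk, rotating through the three bulks to preserve balance. Every such move is legal by the structural lemma: an oriented edge between two distinct vertex classes of a $P_k$-homomorphism-saturated graph creates no $P_k$-homomorphism, since the longest walk through the new edge uses at most $k-1$ edges.

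The accounting step classifies each Prolonger move as either \emph{consistent with the template} (committing a vertex to a planned class, which actually helps Shortener) or \emph{spoiling} (diverting a vertex from its target bulk, or creating an unplanned intermediate class). Since the players alternate, Prolonger accumulates at most one spoiling move per round, and a tempo-counting argument shows that at termination all but $O(1)$ non-marker vertices lie in the three bulk classes, with bulk sizes differing by at most a constant. The $O(n+k^2)$ slack in the displayed formula absorbs both these constant-size discrepancies and the $O(k^2)$ edges among and between singletons.

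The main obstacle will be making the above case analysis rigorous: after a Prolonger edge that promotes some vertex to an unplanned intermediate class, Shortener must still have a legal fallback move that makes progress on the template. Proving such a fallback always exists before termination requires a careful invariant linking the current class structure to the set of vertices still available as bulk anchors, and I expect this to be the technically delicate step.
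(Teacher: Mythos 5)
Your overall architecture matches the paper's: Shortener forces a terminal profile of $k-4$ singleton classes plus three large classes, and Corollary~\ref{size-corol} converts that profile into the edge bound. But three concrete points in your plan do not survive scrutiny. First, the identity of the three ``bulk positions'' is not a free choice. A single oriented edge from an anchor only bounds the class of the new vertex from one side, so one Shortener move can pin a vertex only into an \emph{extremal} class: into $G_1$ via an edge into the class-$2$ marker, into $G_{k-1}$ via an edge from the class-$(k-2)$ marker, and into $G_{k-2}$ (for the tail of an isolated Prolonger edge) via the class-$(k-3)$ marker. Pinning a vertex into a generic middle position needs two edges, one from below and one from above, and Shortener does not have the tempo for that; this is why the paper takes the bulks to be exactly $G_1$, $G_{k-2}$, $G_{k-1}$. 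Relatedly, your Shortener ``rotating through the three bulks to preserve balance'' misreads why the balanced value appears in the bound: Shortener would prefer imbalance (it lowers the edge count), but she cannot choose which bulk a vertex lands in, because Prolonger picks the orientation of his edge first --- an out-edge at $u$ bars $u$ from $G_{k-1}$, an in-edge bars it from $G_1$ --- so the bound must be computed at the worst case over placements, which is balance.

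Second, your legality argument is circular: the structural lemma describes saturated graphs, whereas mid-game the ``class'' of a vertex is only a lower bound (the longest path ending at it can still grow), so you must verify directly that each pinning edge $\overrightarrow{uv}$ keeps every walk below $k$ vertices; this is where the paper's invariant --- after every Shortener move, all vertices are either in the single component $C$ containing the $(k-1)$-vertex path or isolated, with at most one vertex of $C$ not yet pinned --- does real work. Third, and most importantly, the ``tempo-counting argument'' and the ``fallback move'' that you defer are the entire content of the proof: you must rule out Prolonger building a second long path, or committing fresh vertices to unplanned middle classes faster than Shortener can pin them. The paper's explicit case analysis (respond to an isolated edge by attaching it at $v_{k-3}$; to a cross edge by pinning at $v_2$ or $v_{k-2}$; to an edge internal to $C$ by pinning some untouched outside vertex) is precisely the resolution of this difficulty, and without it your proposal is a plausible plan rather than a proof.
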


\begin{proof}
We give a strategy for Shortener which enforces this upper bound. 

First let Shortener build a path of $k-1$ vertices. We proceed inductively on the number of vertices $l$ of a maximal path assembled so far, and let Shortener use the following strategy:
\begin{itemize}
\item if Prolonger puts down an isolated edge $\overrightarrow{uv}$ and the existing path $v_1 v_2 \ldots v_l$ has $k-3$ vertices or less then Shortener will add the edge $\overrightarrow{v_lu}$, thus forming the path $v_1 v_2 \ldots v_l u v$; 
\item if Prolonger puts down an isolated edge $uv$ and if the existing path has $k-2$ edges then Shortener will add the edge $\overrightarrow{v_{k-1}u}$ forming the path $v_1 v_2 \ldots v_{k-1} u v$ and forcing $v_{k-1}$ to be in $G_{k-1}$ or $G_{k-2}$; 
\item if Prolonger connects two vertices already on the longest path then Shortener will attach an isolated vertex at the end of the path; 
\item if Prolonger puts down an edge $\overrightarrow{v_i v}$ to an isolated vertex $v$ where $i<l$ then Shortener will create the edge $\overrightarrow{v_l v}$ thus extending the existing path to $v_1 v_2 \ldots v_l v$; 
\item if Prolonger puts down an edge $\overrightarrow{v v_i}$ from an isolated vertex $v$ where $1<i$ then Shortener will create the edge $\overrightarrow{v v_1}$ thus extending the existing path to $v v_1 v_2 \ldots v_l$; 
\item if Prolonger extends the existing path then Shortener will also just increase the existing longest path (if it has less than $k-1$ vertices) by attaching an isolated vertex at the end. 
\end{itemize}
Note that after the path of $k-1$ vertices is formed, there is one vertex in each class $G_i$ and there may be one additional vertex that could eventually belong to either $G_{k-2}$ or $G_{k-1}$.

Hence now we have a path $P = v_1 v_2 \ldots v_{k-1}$. Once this path exists, Shortener takes a new strategy:

Let $C$ be the component containing $P$. We will ensure that after Shortener's move, all vertices are in $C$ or are isolated. Let $v \in C$ and $u\notin C$. If Prolonger creates an edge $\overrightarrow{vu}$ then Shortener will create the edge $\overrightarrow{v_{k-2}u}$; if Prolonger creates an edge $\overrightarrow{uv}$ then Shortener will create the edge $\overrightarrow{uv_2}$. Hence Shortener forces $u$ to be in either $G_1$ or $G_{k-1}$. If Prolonger plays an edge internal to $C$ then if there is $v\notin C$ Shortener will create the edge $v_{k-2} v$, thus forcing $v$ into $G_{k-1}$. If Prolonger plays an isolated edge $uu'$ then Shortener will create the edge $v_{k-3} u$ thus putting $u,u' \in C$ and forcing $u$ into $G_{k-2}$ and $u'$ into $G_{k-1}$. See Figure~\ref{nooled3} for an illustration.

\begin{figure}[h]
\begin{tikzpicture}[decoration = {markings,
    mark = at position 0.5 with {\arrow{angle 90}}
  }
  ] 
    \tikzstyle{vertex}=[draw,circle,fill=blue,minimum size=0.2cm,inner sep=0pt]
    \node[vertex][label=above:$v_1$] (v1) at ( 1,0) {};
    \node[vertex][label=above:$v_2$] (v2) at ( 2,0) {};
    \node[vertex][label=above:$v_3$] (v3) at ( 3,0) {};
    \node[vertex][label=above:$v_4$] (v4) at ( 4,0) {};
    \node[vertex][label=above:$v_{k-4}$] (vk4) at (5,0) {};
    \node[vertex][label=above:$v_{k-3}$] (vk3) at (6,0) {};
    \node[vertex][label=above:$v_{k-2}$] (vk2) at (7,0) {};
    \node[vertex][label=above:$v_{k-1}$] (vk1) at (8,0) {};
    \node[vertex][label=below:$u_1$] (u1) at (3,-1) {};
    \node[vertex][label=below:$u_2$] (u2) at (4,-1) {};    
    \node[vertex][label=below:$u_3$] (u3) at (5,-1) {};
     \begin{scope}[every path/.style={-}]
       \draw[postaction = decorate]  (v1) -- (v2);
       \draw[postaction = decorate]  (v2) -- (v3);
       \draw[postaction = decorate]  (v3) -- (v4);
       \draw[postaction = decorate,dashed] (v4) -- (vk4);
       \draw[postaction = decorate]  (vk4) -- (vk3);
       \draw[postaction = decorate]  (vk3) -- (vk2);
       \draw[postaction = decorate]  (vk2) -- (vk1);
       \draw[postaction = decorate]  (u1) -- (v2);
       \draw[postaction = decorate]  (u1) -- (u2);
       \draw[postaction = decorate]  (vk3) -- (u2);
       \draw[postaction = decorate]  (u2) -- (u3);
     \end{scope}          
\end{tikzpicture}
\captionof{figure}{Shortener's strategy to force into classes the vertices of paths on 3 vertices}
\label{nooled3}
\end{figure}
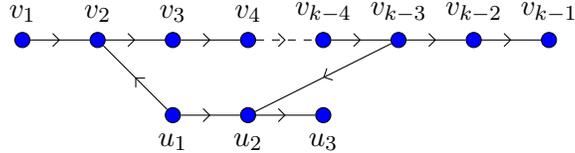

Hence after Shortener's move at most 1 vertex in $C$ does not have a fixed class, with the exceptional vertex in $G_{k-2}$ or $G_{k-1}$. Furthermore, all classes except $G_1$, $G_{k-2}$ and $G_{k-1}$ contain exactly one vertex. Given these conditions, by Corollary \ref{size-corol} the largest number of edges is achieved when each class $G_i$ where $2 \leq i \leq k-3$ contains one vertex and the classes $G_1$, $G_{k-2}$ and $G_{k-1}$ contain an almost equal number of vertices. Hence Shortener can force the game score to be at most $\frac{1}{2}(n^2-k+4-2\lfloor\frac{n-k+4}{3}\rfloor^2-\lceil\frac{n-k+4}{3}\rceil^2) = \frac{1}{3}n^2 + \frac{1}{3}kn + O(n + k^2)$.
\end{proof}

We now turn to an analysis of Prolonger's strategy.

\begin{lemma}\label{path-Prolonger}
Let $n \geq k \geq 4$. Then
\begin{align*}
\G_\dir & \geq \binom{k-1}{2} + (n-k+1)(k-2) + \frac{1}{2} (n-k-14)^2 (1 - \frac{1}{3})\\
& = \frac{1}{3}n^2 + \frac{1}{3}nk + O(n+k^2).\\
\end{align*}
\end{lemma}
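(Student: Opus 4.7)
The plan is to exhibit a strategy for Prolonger which forces the final $P_k$-homomorphism-saturated graph to have vertex classes satisfying $|G_1| \approx |G_{k-2}| \approx |G_{k-1}| \approx (n-k+4)/3$, while all middle classes $G_2, \ldots, G_{k-3}$ remain of size exactly $1$. Substituting such class sizes into Corollary~\ref{size-corol} gives exactly the stated edge count modulo lower-order terms; this is the same distribution that Lemma~\ref{path-Shortener} identified as optimal for Prolonger within Shortener's constraints, so matching it yields a tight lower bound.

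In the first phase, Prolonger builds the initial directed path $v_1 \to v_2 \to \ldots \to v_{k-1}$, mirroring the path-construction argument of Lemma~\ref{path-Shortener}: whenever Shortener fails to cooperate, Prolonger extends the current longest directed path himself. This phase costs $O(k)$ moves, and at its conclusion the skeleton of the class structure is fixed with one vertex per class.

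In the second phase, Prolonger greedily commits each of the remaining $n - k + 1$ isolated vertices to the currently smallest of the three target classes $G_1$, $G_{k-2}$, $G_{k-1}$, while preserving the invariant that all middle classes have size $1$. Three walk-avoidance arguments provide reliable commitment tactics. Playing $\overrightarrow{uv_2}$ locks an isolated $u$ into $G_1$, since any later in-edge $\overrightarrow{v_j u}$ would complete a $P_k$-walk of the form $v_j \to u \to v_2 \to \ldots \to v_{k-1}$. Symmetrically, playing $\overrightarrow{v_{k-2}u}$ locks $u$ into $G_{k-1}$. To place $u$ in $G_{k-2}$, Prolonger uses two of his turns on the same vertex, first $\overrightarrow{uv_{k-1}}$ (blocking the class $G_{k-1}$) and later $\overrightarrow{v_{k-3}u}$ (forcing class $G_{k-2}$); any Shortener move through $u$ between these two turns would create a $P_k$-walk and is therefore unavailable. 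In the alternative, Prolonger may play an isolated edge $\overrightarrow{uu'}$, after which every safe Shortener response simultaneously commits one endpoint to $G_{k-2}$ and the other to $G_{k-1}$, contributing one vertex to each class per round.

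The main obstacle is verifying that this greedy balancing strategy succeeds against any Shortener counter-play. This requires a case analysis of Shortener's possible responses---committing an isolated vertex to one of the three target classes, disrupting a partially-committed $G_{k-2}$ vertex, or playing already-forced edges among committed vertices---and an accounting argument showing the imbalance among $|G_1|$, $|G_{k-2}|$, $|G_{k-1}|$ never exceeds a constant throughout the game. The partial $G_{k-2}$ commitments in progress at the moment the game terminates, together with the $O(k)$ vertices whose class is irrevocably set during Phase 1, account for the explicit constant $-14$ appearing in the statement, and propagate as the final $O(n + k^2)$ error term.
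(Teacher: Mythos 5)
There is a genuine gap, and it sits exactly where you flag it: the ``accounting argument showing the imbalance among $|G_1|$, $|G_{k-2}|$, $|G_{k-1}|$ never exceeds a constant'' is false, and no greedy balancing of those three classes can deliver the claimed bound. After the path $v_1\ldots v_{k-1}$ exists, Shortener can on \emph{every} move play $\overrightarrow{uv_2}$ for a fresh isolated vertex $u$; this irrevocably pins $u$ to $G_1$ (any later in-edge $\overrightarrow{wu}$ would create the walk $w\,u\,v_2\ldots v_{k-1}$ on $k$ vertices), and Prolonger has no way to undo or prevent it. Prolonger can answer by pinning one fresh vertex per move to $G_{k-1}$, but by your own description pinning a vertex to $G_{k-2}$ costs him two moves (and the alternative isolated-edge tactic fails because Shortener is under no obligation to respond to that edge --- she simply keeps dumping vertices into $G_1$). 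So against this counter-play the three large classes end up in proportions roughly $4:2:1$ (at best about $(n/2,n/4,n/4)$), for which $\sum_i|G_i|^2$ is at least $\tfrac{3}{7}n^2(1+o(1))$ and the score is at most about $\tfrac{2}{7}n^2$, strictly below the required $\tfrac13 n^2$ in the leading term. In short, a Prolonger strategy that tries to reproduce the extremal configuration of Lemma~\ref{path-Shortener} (three balanced extreme classes, singleton middle classes) cannot certify the lower bound, because Shortener controls which extreme class her own moves feed.

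The paper's proof takes a structurally different route precisely to dodge this. Prolonger never fights over the global class sizes; instead he builds many vertex-disjoint short path structures ($A_\lambda$, $B_\lambda$, $C_\lambda$), each of which forces its vertices to occupy at least $\lambda+1$ distinct classes no matter how Shortener interferes, and each of which has ``normalised score'' $\sum_i(d_i/r)^2\le\tfrac13$ once it reaches size $A_6$, $B_3$ or $C_2$. A convexity argument then aggregates these local guarantees: if $G$ is covered by disjoint structures $D_j$ then $e(G)\ge\frac{n^2}{2}\bigl(1-\sum_j s(D_j)|D_j|/n\bigr)$, which yields the $\tfrac13 n^2$ leading term with only $14$ wasted vertices, independently of which classes Shortener favours with her edges. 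If you want to salvage your approach you would need Prolonger to be able to spread vertices over middle classes as well (which your invariant forbids); once you allow that, you are essentially reconstructing the paper's structure-based argument.
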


\begin{proof}
By corollary~\ref{size-corol}, we know that the score is a concave funtion of the
sizes of the classes $G_i$. Hence to prove lower bounds on the game score it suffices to show that Prolonger can enforce constraints on the sizes of the classes $G_i$. Given an assignment of $r$ vertices to the $k - 1$
classes with $d_i$ vertices in class $i$, we define the \emph{normalised score} to be
$\sum_{i=1}^{k-1}(d_i/r)^2$. Note that the normalised score is a convex function of the density of the classes within the set of $r$ vertices.

Suppose we partition $G$ into disjoint subgraphs $D_1,\ldots,D_m$.
Each $D_j$ constrains the possible assignment of its vertices to the classes $G_i$, and we
denote by $s(D_j)$ the maximal normalised score of all possible assignments consistent with $D_j$. Note we have
\[
\sum_{j=1}^{m}s(D_j)\frac{|D_j|}{n} = \sum_{j=1}^{m}\sum_{i=1}^{k-1}\left(\frac{d_{ij}}{|D_j|}\right)^2 \frac{|D_j|}{n}
\]
for some of $d_{ij} \geq 0$ with $\sum_{i=1}^{k} d_{ij} = |D_j|$. Hence
\[
\sum_{j=1}^{m}s(D_j)\frac{|D_j|}{n} \geq \sum_{i=1}^{k-1}\left(\sum_{j=1}^{m}\frac{d_{ij}}{n}\right)^2 = \sum_{i=1}^{k-1}\frac{|G_i|^2}{n^2},
\]
and so $e(G) \geq \frac{n^2}{2}\left(1-\sum_i s(D_i)\frac{|D_i|}{n}\right)$.
Hence to prove that a strategy for Prolonger is effective it suffices to shows
that Prolonger can force a disjoint, spanning set of subgraphs of low
normalised score to appear in the game. We will refer to these subgraphs as
structures. In particular, we will show that Prolonger can force a path of
length $k - 1$ to exist, and force all but 14 of the remaining vertices to be in
structures of score at most $\frac{1}{3}$, which demonstrates the required bound.	

We now define a strategy for Prolonger to achieve the bound. As in
lemma~\ref{path-Shortener}, Prolonger can initially force a path of length $k - 1$ to exist,
with at most one additional vertex gaining degree greater than $0$. After this,
Prolonger will attempt to extend a maximal path. If the maximal path is of
length $< k - 2$, it is extended to absorb an isolated edge if possible,
and otherwise to a vertex of degree 0. If the path is of length $k - 2$, it is
extended to some other vertex in it's structure in preference to a vertex of
degree 0. We consider the following classes of structure (cf. Figure~\ref{abc}):

\begin{enumerate}[$A_\lambda$:]
\item a path on $\lambda+1$ vertices, which may be extended in either direction, and at most $\lambda$ non-isolated vertices.
\item a path on $\lambda+1$ vertices, with one vertex on the path having an additional in-edge or out-edge, which may prevent the extension of the path in one direction, and at most an additional $\lambda-1$ non-isolated vertices.
\item a path on $\lambda+1$ vertices, with vertices on the path with an additional in-edge and out-edge, which may prevent the extension of the path in both directions, and at most an additional $\lambda-2$-non isolated vertices. 
\end{enumerate}
We will abuse notation to speak of $s(A_\lambda) = \max \{s(G) : \exists F \in A_\lambda, F \textrm{ a spanning subgraph of }G\}$,
and we define $s(B_\lambda), s(C_\lambda)$ similarly.

\begin{figure}[h]
\scalebox{.45}{
 \begin{tikzpicture}[decoration = {markings,
    mark = at position 0.5 with {\arrow{angle 90}}
  }
  ] 
    \tikzstyle{vertex}=[draw,circle,fill=blue,minimum size=0.2cm,inner sep=0pt]
    \tikzstyle{cleanvertex}=[minimum size=0cm,inner sep=0pt]
    \node[vertex] (p1) at ( 0,5) {};
    \node[vertex] (p2) at ( 0,4) {};
    \node[vertex] (p3) at ( 0,3) {};
    \node[vertex] (p4) at ( 0,2) {};
    \node[vertex] (p5) at ( 0,1) {};
    \node[vertex] (p6) at ( 0,0) {};
    \node[vertex] (p7) at ( 1,0) {};
    \node[vertex] (p8) at ( 2,0) {};
    \node[vertex] (p9) at ( 3,0) {};
    \node[vertex] (p10)at( 4,0) {};
    \node[vertex] (p11)at( 5,0) {};
    \node[cleanvertex] (u1) at (2,1) {};
    \node[cleanvertex] (u2) at (3,1) {};
    \node[cleanvertex] (u3) at (4,1) {};
    \node[cleanvertex] (u4) at (5,1) {};
    \node[cleanvertex] (u5) at (6,1) {};

    \node[vertex] (q1) at ( 10,5) {};
    \node[vertex] (q2) at ( 10,4) {};
    \node[vertex] (q3) at ( 10,3) {};
    \node[vertex] (q4) at ( 10,2) {};
    \node[vertex] (q5) at ( 10,1) {};
    \node[vertex] (q6) at ( 10,0) {};
    \node[vertex] (q7) at ( 11,0) {};
    \node[vertex] (q8) at ( 12,0) {};
    \node[vertex] (q9) at ( 13,0) {};
    \node[vertex] (q10)at( 14,0) {};
    \node[cleanvertex] (v1) at (11,1) {};
    \node[cleanvertex] (v2) at (12,1) {};
    \node[cleanvertex] (v3) at (13,1) {};
    \node[cleanvertex] (v4) at (14,1) {};
    \node[cleanvertex] (v5) at (15,1) {};

    \node[vertex] (r1) at ( 20,5) {};
    \node[vertex] (r2) at ( 20,4) {};
    \node[vertex] (r3) at ( 20,3) {};
    \node[vertex] (r4) at ( 20,2) {};
    \node[vertex] (r5) at ( 20,1) {};
    \node[vertex] (r6) at ( 20,0) {};
    \node[vertex] (r7) at ( 21,0) {};
    \node[vertex] (r8) at ( 22,0) {};
    \node[vertex] (r9) at ( 23,0) {};
    \node[cleanvertex] (w1) at (21,4) {};
    \node[cleanvertex] (w2) at (21,1) {};
    \node[cleanvertex] (w3) at (22,1) {};
    \node[cleanvertex] (w4) at (23,1) {};
    \node[cleanvertex] (w5) at (24,1) {};
    
     \begin{scope}[every path/.style={-}]
       \draw[postaction = decorate]  (p1) -- (p2);
       \draw[postaction = decorate]  (p2) -- (p3);
       \draw[postaction = decorate,dashed] (p3) -- (p4);
       \draw[postaction = decorate]  (p4) -- (p5);
       \draw[postaction = decorate]  (p5) -- (p6);
       \draw[postaction = decorate]  (q1) -- (q2);
       \draw[postaction = decorate]  (q2) -- (q3);
       \draw[postaction = decorate,dashed] (q3) -- (q4);
       \draw[postaction = decorate]  (q4) -- (q5);
       \draw[postaction = decorate]  (q5) -- (q6);       
       \draw[postaction = decorate]  (r1) -- (r2);
       \draw[postaction = decorate]  (r2) -- (r3);
       \draw[postaction = decorate,dashed] (r3) -- (r4);
       \draw[postaction = decorate]  (r4) -- (r5);
       \draw[postaction = decorate]  (r5) -- (r6);       
       \draw[postaction = decorate]  (u1) -- (p7);
       \draw[postaction = decorate]  (u2) -- (p8);
       \draw[postaction = decorate]  (u3) -- (p9);
       \draw[postaction = decorate]  (u4) -- (p10);
       \draw[postaction = decorate]  (u5) -- (p11);
       \draw[postaction = decorate]  (v1) -- (q6);
       \draw[postaction = decorate]  (v2) -- (q7);
       \draw[postaction = decorate]  (v3) -- (q8);
       \draw[postaction = decorate]  (v4) -- (q9);
       \draw[postaction = decorate]  (v5) -- (q10);       
       \draw[postaction = decorate]  (w1) -- (r1);
       \draw[postaction = decorate]  (w2) -- (r6);
       \draw[postaction = decorate]  (w3) -- (r7);
       \draw[postaction = decorate]  (w4) -- (r8);
       \draw[postaction = decorate]  (w5) -- (r9);              
       \draw[dashed]  (p8) -- (p9);              
       \draw[dashed]  (q8) -- (q9);              
       \draw[dashed]  (r8) -- (r9);
\draw [decorate,decoration={brace,mirror,amplitude=10pt},xshift=-4pt,yshift=0pt]
(-.3,5) -- (-.3,0) node [black,midway,xshift=-1.2cm] 
{\LARGE $\lambda +1$};
\draw [decorate,decoration={brace,mirror,amplitude=10pt},xshift=-4pt,yshift=0pt]
(9.7,5) -- (9.7,0) node [black,midway,xshift=-1.2cm] 
{\LARGE $\lambda +1$};
\draw [decorate,decoration={brace,mirror,amplitude=10pt},xshift=-4pt,yshift=0pt]
(19.7,5) -- (19.7,0) node [black,midway,xshift=-1.2cm] 
{\LARGE $\lambda +1$};
\draw [decorate,decoration={brace,amplitude=10pt,mirror,raise=4pt},yshift=0pt]
(1,-.3) -- (5,-.3) node [black,midway,yshift=-1cm] {\LARGE $\lambda$};       
\draw [decorate,decoration={brace,amplitude=10pt,mirror,raise=4pt},yshift=0pt]
(11,-.3) -- (14,-.3) node [black,midway,yshift=-1cm] {\LARGE $\lambda -1$};       
\draw [decorate,decoration={brace,amplitude=10pt,mirror,raise=4pt},yshift=0pt]
(21,-.3) -- (23,-.3) node [black,midway,yshift=-1cm] {\LARGE $\lambda -2$};       
     \end{scope}          
 \end{tikzpicture}
}
\captionof{figure}{Structures $A_\lambda$, $B_\lambda$ and $C_\lambda$ respectively}
\label{abc}
\end{figure}
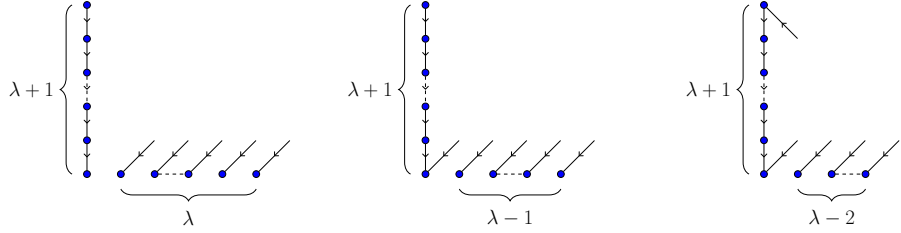

In any G containing $A_\lambda, B_\lambda$ or $C_\lambda$ as spanning subgraphs, there is at
least 1 vertex in $\lambda + 1$ classes. Hence if Shortener is permitted to control
the placement of the remaining vertices, she maximises the normalised score
of the structure by placing all of the remaining vertices in a single class
with one vertex of the path, and having the number of vertices off the path
being maximal. Hence we have:
\[s(A_\lambda)= \frac{\lambda+(\lambda+1)^2}{(2\lambda+1)^2},\quad
s(B_\lambda)= \frac{\lambda+\lambda^2}{(2\lambda)^2} = \frac{1}{4} + \frac{1}{4\lambda},\quad
s(C_\lambda)=\frac{\lambda+(\lambda-1)^2}{(2\lambda-1)^2}.
\]
These functions are decreasing in $\lambda$, and we have $s(A_6), s(B_3), s(C_2) \leq \frac{1}{3}$. We consider moves in pairs where Prolonger moves first, discarding at most 2 vertices touched by Shortener. To prevent Prolonger from increasing $\lambda$ for the structure he is building there either need to be no remaining vertices, or Shortener must prevent the path from growing. Assuming there are suitable vertices, Prolonger will extend the path, and so if Shortener spends her move to prevent the path from being extended in one direction the structure is changed from $A_\lambda$ into $B_{\geq \lambda+1}$ or $B_\lambda$ into $C_{\geq \lambda+1}$. If Shortener spends her move to make a new vertex $v$ have degree $> 0$, then Prolonger adds $v$ to the structure and so $\lambda$ has merely increased by 1. If Shortener plays an independent edge, then Prolonger either immediately incorporates it into a path, increasing $\lambda$ by 2, or only one vertex can be used in the path, which changes the structure from $A_\lambda$ into $B_{\lambda+1}$ or $B_\lambda$ into $C_{\lambda+1}$. Since a 1-point subgraph has structure $A_0$, we have that $C_2, B_{\geq3}$ or $A_{\geq6}$ is achieved as the structure of the subgraph unless Prolonger runs out of vertices of degree 0.

Note that since paths of length $k - 2$ are extended to other vertices in the structure, we have that for $4 \leq k < 6$ Prolonger will form $B_{k-1}$ structures in preference to $A_{k-1}$ structures, and so the normalised score of each structure will still be $\leq \frac{1}{3}$ for small $k$. We have $|A_6|, |B_2|, |C_1| \leq 11$, and control of at most one vertex was lost whilst producing the path of length $k - 1$, and we may discard 2 vertices to allow Prolonger to move first in the pairs, so there are at most 14 vertices that are not on structures of normalised score $\frac{1}{3}$ or the path of length $k-1$. Hence we have:
\[
\Gd(K_n, P_k) \geq \binom{k-1}{2} + (n-k-1)(k-2) + \frac{1}{2}(n-k-14)^2\left(1 - \frac{1}{3}\right)
\]
by considering pairs of vertices both on the path, pairs of vertices with exactly one on the path, and pairs of vertices disjoint from the path for the three terms.
\end{proof}
Having exhibited strategies for Prolonger and Shortener, we are now in a position to prove Theorem~\ref{directedgamescore}.
\begin{proof}[Proof of Theorem~\ref{directedgamescore}]
If $k \leq 2$, no move is legal and the score is 0.

If $k = 3$ then all graphs must be bipartite with classes $G_1,G_2$, and all edges
directed from $G_1$ to $G_2$. Hence any saturated graph will be complete
bipartite. Prolonger can ensure that the sets $G_1$, $G_2$ differ in size by at most 1.
Take $G_1$ to be the vertices of positive out-degree, and $G_2$ the
vertices of positive indegree. Initially $|G_1| = |G_2| = 0$, and after the first
move $|G_1| = |G_2| = 1$. Whenever Shortener moves, at most one vertex is
added to each of $G_1, G_2$. If $|G_1| \neq |G_2|$ and there is a vertex $v$ of degree 0,
then Prolonger can trivially add it to the smaller class. If $|G_1| = |G_2|$ and
there are two vertices $u, v$ of degree 0, then Prolonger can add the edge $\overrightarrow{uv}$.
Hence after Prolonger's move, either $|G_1| = |G_2|$ or $||G_1| - |G_2|| = 1$. Hence the score will be $\left\lfloor\frac{n^2}{4}\right\rfloor$.

For $k \geq 4$ we have by lemma~\ref{path-Shortener}:
\begin{align*}
\Gd(K_n, \{P_k\}) &\leq \frac{1}{2}\left(n^2 - k + 4 - 2\left\lfloor\frac{n-k+4}{3}\right\rfloor^2 - \left\lceil\frac{n-k+4}{3}\right\rceil^2\right)\\
&= \frac{1}{3}n^2 + \frac{1}{3}nk + O(n + k^2)
\end{align*}
Furthermore, we have by lemma~\ref{path-Prolonger}	:
\begin{align*}
\Gd(K_n, \{P_k\}) &\geq \binom{k-1}{2} + (n-k-1)(k-2) + \frac{1}{2}(n-k-12)^2\left(1 - \frac{1}{3}\right)\\
& = \frac{1}{3}n^2 + \frac{1}{3}nk + O(n + k^2)
\end{align*}
as required.
\end{proof}

\section{Games on undirected graphs derived from directed structures}\label{orientedsection}

In this section, we turn our attention to the orientation game. In the directed game, we forbade directed graphs
containing homomorphic images of a specified collection of directed graphs.
As a corollary, the direction of each edge was specified when it was picked
by Prolonger or Shortener. By construction, in this game this constraint is relaxed.

In particular, we consider $\Go(K_n, P_{k+1})$. Here, the Gallai-Hasse-Roy-Vitaver theoem~\cite{Ga}\cite{Has}\cite{Ro}\cite{Vi} states that the existence of a suitable orientation for $\G_i \Leftrightarrow \chi(\G_i) \leq k$, for $\chi$ the chromatic number. Equivalently, there exists a homomorphism $\G_i \rightarrow K_k$. Similar characterisations of somewhat larger classes of excluded directed graphs exist by other Gallai-Hasse type theorems.

A homomorphism $c : \G_i \rightarrow K_k$ implies that the sets $c^{-1}(i)$ are
independent sets of vertices for each $i \in [k]$. This is analogous to the situation in
$\Gd$, where we could define a function $c(v)$ the length of the longest path in $\G_i$ ending at $v$, with $\overrightarrow{uv} \in \G_i$ implied $c(u) < c(v)$. In $\G_o$ we only have $uv \in \G_i$ implies $c(u) \neq c(v)$.

As in the homomorphism game, the score is a concave quadratic function of the sizes of the classes, and so heuristically Shortener does well if many of the classes are very small, whilst Prolonger does well if the classes are all broadly equal in size. We conjecture that optimal play by Shortener will force some collection of vertices to be in classes of size 1, which requires that Shortener join each of these vertices to every other vertex. Conversely, Prolonger will conjecturally attempt to split vertices into many classes of equal size, by placing disjoint cliques. Insisting that each player takes an equal number of turns yields that Shortener forces $(1-\lambda)k$ vertices to be in isolated classes, whilst the remaining vertices are split into $\lambda k$ classes of approximately equal size by Prolonger placing $\sim\frac{n}{\lambda k}$ independent copies of $K_{\lambda k}$, with $\lambda = \frac{2a}{b+2a}$.

Heuristically, these bounds should be demonstrated by showing that as each player adds edges to vertices, the other can add appropriate edges to ensure that (i) no vertex is excluded from $(1-\lambda)k$ small classes before it is in an independent $K_{(\lambda - \epsilon)k}$, and that (ii) no vertex is in an independent $K_{(\lambda + \epsilon) k}$ before it is excluded from $(1-\lambda)k$ small classes. We cannot show bounds of this tightness, and instead demonstrate bounds on the value of $\lambda$.

\begin{theorem}
For all $a, b \ll k \ll n$, we have:
\[\binom{n}{2}\left(1 - \frac{1}{\lambda^- k}\right) (1+o(1))\leq \Go(K_n, P_{k+1}) \leq \binom{n}{2}\left(1 - \frac{1}{\lambda^+ k}\right) (1+o(1)),\]
with
\[\lambda^- = \frac{\left\lfloor\frac{b}{2a}\right\rfloor}{1+\left\lfloor\frac{b}{2a}\right\rfloor},\quad
  \lambda^+ = \frac{1}{1+\left\lfloor\frac{a}{2b}\right\rfloor}.\] 
\end{theorem}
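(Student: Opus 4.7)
The plan is to prove both bounds by designing explicit reactive strategies for Shortener and Prolonger that control the sizes of the color classes of the final saturated graph.

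\emph{Reduction.} By the Gallai--Hasse--Roy--Vitaver theorem, a $P_{k+1}$-orientation-saturated subgraph $G$ of $K_n$ is precisely an edge-maximal graph with $\chi(G)\leq k$, hence (for $n>k$) a complete multipartite graph with at most $k$ parts $V_1,\ldots,V_k$. In analogy with Corollary~\ref{size-corol}, the edge count equals $\binom{n}{2}-\sum_i\binom{|V_i|}{2}$, which is a concave quadratic in the part sizes. It therefore suffices to prove bounds on the size distribution of the $V_i$: Shortener benefits from skew (many singleton classes and one dominant class), Prolonger from balance (classes of size $\sim n/k$).

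\emph{Upper bound (Shortener).} Shortener adaptively designates a target set $T\subset V$ of size $|T|=(1-\lambda^+)k$ and attempts to make each $v\in T$ a singleton color class by connecting it to every other vertex. On each of her turns she spends all $b$ moves on edges incident to $T$, prioritising the vertex of $T$ closest to full degree; she may promote new vertices into $T$ in response to obstructing Prolonger moves. If the strategy succeeds then the remaining $n-|T|$ vertices lie in at most $\lambda^+ k$ classes, and by convexity $\sum_i\binom{|V_i|}{2}\geq\binom{n}{2}/(\lambda^+ k)\cdot(1+o(1))$, giving the stated upper bound. The value of $\lambda^+$ is pinned by Prolonger's optimal counter, which is to assemble an obstructing clique in $V\setminus T$ that consumes chromatic budget: per Shortener turn, Prolonger contributes $a$ edges with up to $2a$ distinct endpoints that can be recruited into this clique, while Shortener commits only $b$ of the $n-1$ edges needed for a single $T$-completion. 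Balancing these rates gives a cap of $\lfloor a/(2b)\rfloor$ additional chromatic classes Prolonger can enforce outside $T$, and hence $|T|=\lfloor a/(2b)\rfloor k/(1+\lfloor a/(2b)\rfloor)$.

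\emph{Lower bound (Prolonger).} Dually, Prolonger designates a palette $P\subset V$ of size $\lambda^- k$ and first assembles a clique $K_{|P|}$ on $P$ at cost $\binom{|P|}{2}=O(k^2)$, absorbed into the $o(1)$ term; this forces $P$ into $|P|$ distinct color classes. He then uses subsequent turns to distribute $V\setminus P$ evenly among these classes, by attaching each outside vertex to $|P|-1$ palette members (leaving free the unique palette-class that vertex will join). Shortener's optimal counter is to convert palette vertices into singletons, thereby shrinking the effective palette; but she can do so at a rate of only $b$ of the $n-1$ completion-edges per turn against the $2a$ new palette endpoints Prolonger can add per turn, so she can excise only $\lfloor b/(2a)\rfloor^{-1}\cdot|P|$ worth of the palette, leaving $\lambda^- k$ roughly balanced classes of size $\sim n/(\lambda^- k)$ and hence the stated lower bound.

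\emph{Main obstacle.} The central technical difficulty is legality: each move of each player must preserve $\chi(G)\leq k$, so both strategies must be carried out reactively while maintaining delicate invariants. Shortener must at every stage maintain a $k$-coloring in which the already-completed $T$-vertices occupy distinct singleton classes, which requires controlling the clique number of $V\setminus T$ throughout; Prolonger must symmetrically ensure Shortener cannot covertly push the chromatic requirement of $P$ above $|P|$. The factor-of-two gap between $\lambda^\pm$ and the conjectured value $\lambda=\frac{2a}{b+2a}$ arises precisely because these reactive invariants force each player to spend some moves responding to the opponent rather than advancing their own agenda; closing this gap would require the subtler simultaneous pairing sketched in the paragraph before the theorem. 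The equivalence with the game of Hefetz--Krivelevich--Naor--Stojakovi\'{c} noted later in this section should provide a convenient framework for rigorously verifying these invariants.
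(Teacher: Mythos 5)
Your reduction (Gallai--Hasse--Roy--Vitaver, complete multipartite saturated graphs, concavity in the class sizes) and the qualitative shape of both strategies (Shortener manufactures universal vertices and hence singleton classes; Prolonger manufactures cliques) match the paper, but there are two genuine gaps. First, your lower-bound strategy does not work as stated: a single palette clique $P$ with $|P|=\lambda^- k<k$, together with edges from each outside vertex to $|P|-1$ palette members, only controls which \emph{palette} class an outside vertex may join. Since $k-|P|$ colours are unused by $P$, nothing prevents the saturated graph from containing one or several enormous colour classes disjoint from $P$, in which case $\sum_i\binom{|V_i|}{2}$ is far larger than $\frac{n^2}{2\lambda^- k}$ and the claimed bound fails. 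The paper avoids this by having Prolonger partition \emph{all} $n$ vertices into disjoint sets of size roughly $\lambda^- k$ and work to complete each into a clique: a spanning union of disjoint cliques of size $m$ forces \emph{every} colour class to have at most $n/m$ vertices, which is exactly what the bound needs.

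Second, the step where the constants $\lfloor a/(2b)\rfloor$ and $\lfloor b/(2a)\rfloor$ ``come from balancing rates'' is precisely the part that requires an argument, and you leave it as a heuristic (explicitly deferring the ``delicate invariants'' to the equivalence with the Hefetz--Krivelevich--Naor--Stojakovi\'{c} game, which is a pointer rather than a proof). The paper's mechanism is concrete: edges are coloured Red (inside the strategist's target structure) or Blue (all others), and the strategist restores, after each of his or her turns, a per-vertex invariant of the form $d_{\mathrm{Red}}(v)\geq c\,d_{\mathrm{Blue}}(v)$ with $c$ the relevant floor. Since the opponent creates at most $2b$ (resp.\ $2a$) new Blue edge-ends per round, the strategist's $a$ (resp.\ $b$) edges suffice to restore the invariant by induction, and when a vertex reaches full degree its Red degree is at least $\frac{c}{1+c}k-O(a+b)$; this inductive invariant is the actual content of the proof and is absent from your write-up. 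A further point you name but do not engage with is legality --- that the designated Red edges can always be added without pushing the chromatic number above $k$; any complete version of either strategy must address this.
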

\begin{proof}
To demonstrate the lower bound we exhibit a strategy for Prolonger.

Set $c = \left\lfloor\frac{a}{2b}\right\rfloor$. Then since $a \geq 2cb$, Prolonger may add $c$ edges for each end of an edge placed on Shortener's turn. Prolonger also splits the vertices into sets of size $k\lambda^- a - 1$ or $k\lambda^- a - 2$, and considers any edge within one of these sets to be Red, whilst any edge between two sets is Blue.

Prolonger will enforce the constraint that after his move, for every vertex $v$ either all incident Red edges are present or $d_{Red}(v) \geq c d_{Blue}(v)$. We proceed by induction. Shortener's move adds $b$ edges, so $c \sum_v d_{Red}(v)$ increases by at most $2bc \leq a$. Take any $v$ such that $d_{Blue}(v) < c d_{Red}(v)$ after Shortener's move. By induction, all of the earlier incident Blue edges had $c$ incident Red edges added in response, so if $d(v) = k-1$ after Shortener's move, then $d_{Red}(v) \geq k\lambda^- - a - 1$. Hence all vertices can have Red edges added until either there are no remaining potential Red edges incident on the vertex or $d_{Red }(v) \geq c d_{Blue}(v)$.

If Prolonger completes this phase and has remaining edges to add, then he adds them as Red edges if possible. If it is not, then all Blue edges exist, and so the graph contains a disjoint spanning set of cliques, each of size at least $k\lambda^- - a - 2$. Hence the final score will be at least $\binom{n}{2}\left(1 - \frac{1}{\lambda^- k}\right) (1+o(1))$ as required (since $a = o(k)$).

To demonstrate the upper bound we exhibit a strategy for Shortener.

First, Shortener picks a set $S$ of $(1 - \lambda^+)k - b - 1$ vertices and build a complete graph on them. We discard the $O(k^2)$ vertices that Prolonger can touch with an edge whilst this is done. Set $c' = \left\lfloor\frac{b}{2a}\right\rfloor$.

Shortener considers edges into $S$ as Red, and all other edges Blue. Shortener will enforce the constraint that after Shortener has moved, for any vertex $v\notin S, d_{Blue}(v) \geq c'd_{Red}(v)$. We proceed by induction as in the earlier case. Any remaining edges to be added by Shortener will be arbitrary Blue edges if possible. If not, then all of the Red edges exist, and so each vertex in $S$ is in a singleton colour class. Given this, the largest possible score is achieved when the remaining vertices are distributed equally across $\lambda^+ k + b + 1 = \lambda^+ k (1+o(1))$ colour classes. Hence the final score is at most $\binom{n}{2}\left(1 - \frac{1}{\lambda^+ k}\right) (1+o(1))$ as required.
\end{proof}

The primary source of weakness in this proof strategy is a lack of good characterisation of how to allow the constraints to be slightly broken without fatally wounding the proof. Roughly speaking, allowing the constraints to be broken slightly should permit moves to be strategically chosen to do double duty, with both ends of an added edge restoring the desired constraints, and allowing the targetted ratio to be non-integral.

This weakness especially damages Prolonger's strategy, as each of Prolonger's edges increase the degree of two vertices in a useful fashion, whilst Shortener's moves only use one end of each edge. Generally, the problem seems to be that if (say) Shortener is able to violate a constraint slightly in a great many places, then by only working with these vertices and repeating the process required to violate the constraints it she can produce a vertex which violates the constraint by $O(\log n)$. 

A secondary source of weakness for the bound given by Prolonger's strategy is that the most difficult case to handle has Shortener adding many edges between the sets Prolonger is attempting to complete, rather than forcing the existence of small classes. As a result, we expect that optimal play by Shortener does not obstruct Prolonger so substantially. If these weaknesses were to be dealt with, the two bounds come together at our conjectured true behaviour. It is also notable that whilst the purely directed game has a score which is essentially independent of $k$, this game does not, and so reflects the expected behaviour of undirected games rather more closely. We also note that earlier work in \cite{HeKrNaSt} gives at best a $(1-O(\log k / k))\binom{n}{2}$ lower bound for the unbiased game.	

\section{Concluding Remarks}
In this paper, we show some first results extending the $\F$-saturation game of F\"uredi, Reimer and Seress~\cite{FuReSe} to directed graphs, and study a new orientation-saturation game on undirected graphs. The behaviours of these games show subtle dependence on the sizes of the graphs to be excluded, and critical dependence on characterisations of the saturated graphs.


\begin{thebibliography}{99}
\bibitem{BiHoWi} Bir\'o, Cs., Horn, P. and Wildstrom, D.J., On Hajnal's triangle free game, {\it slides for the conference Infinite and finite sets, June 13-17, 2011, Budapest}.

\bibitem{FuReSe} F\"uredi, Z., Reimer, D. and Seress, A., Triangle-Free Game and Extremal Graph Problems, {\it Congr. Numer.} {\bf 82} (1991), 123--128. 

\bibitem{Ga} Gallai, T., On directed graphs and circuits, Theory of Graphs (Proceedings of the Colloquium Tihany 1966), (1968), pp. 115-118.

\bibitem{Has} Hasse, M., Zur algebraischen Begr\"{u}ndung der Graphentheorie. I, Mathematische Nachrichten 28 (1965) (5-6): 275?290, doi:10.1002/mana.19650280503, MR 0179105.

\bibitem{HeKrNaSt} Hefetz, D., Krivelevich, M., Naor, A., Stojakovi\'{c}, M., On Saturation Games, preprint, arXiv:1406.2111v2 [math.CO]

\bibitem{Ro} Roy, B., Nombre chromatique et plus longs chemins d'un graphe, Rev. Fran�aise Informat. Recherche Op\'{e}rationnelle 1 (1967) (5): 129-132, MR 0225683.

\bibitem{Vi} Vitaver, L. M., Determination of minimal coloring of vertices of a graph by means of Boolean powers of the incidence matrix, Doklady Akademii Nauk SSSR, 147 (1962): 758-759, MR 0145509

\bibitem{We} West, D., The F-Saturation Game (2009) and Game Saturation Number (2011), {\it \url{http://www.math.uiuc.edu/~west/regs/fsatgame.html} } (last visited September 1, 2014).

\end{thebibliography}
\end{document}